\newcommand{\real}{\mathbb{R}}
\newtheorem{remark}{Remark}
\newtheorem{definition}{Definition}
\newtheorem{assumption}{Assumption}
\newtheorem{lemma}{Lemma}
\newtheorem{corollary}{Corollary}
\newcommand{\A}{\mathcal{A}}
\newcommand{\Sn}{\mathcal{S}^{n-1}}
\newcommand{\rd}{\mathrm{d}}
\newcommand{\ones}{{\bf1}}
\newcommand{\m}{\mathrm{med}}
\newcommand{\R}{\mathbb{R}}
\begin{document}

\date{}

\title{An Adaptive Total Variation Algorithm for Computing the Balanced Cut of a Graph}

\author{ Xavier Bresson\thanks{Department of Computer Science, City University of Hong Kong, Hong Kong ({\tt xbresson@cityu.edu.hk}).},   Thomas Laurent\thanks{Department of Mathematics, University of California Riverside, Riverside CA 92521  ({\tt laurent@math.ucr.edu})}, David Uminsky\thanks{Department of Mathematics, University of San Francisco, San Francisco CA 94117 ({\tt duminsky@usfca.edu})} and James H. von Brecht\thanks{Department of Mathematics, University of California Los Angeles, Los Angeles CA 90095 ({\tt jub@math.ucla.edu})}  }

\maketitle

\abstract{ We propose an adaptive version of the total variation algorithm  proposed in \cite{BLUV12}   for computing the balanced cut of a graph.  The algorithm from \cite{BLUV12} used a sequence of inner total variation minimizations to guarantee descent of the balanced cut energy as well as convergence of the algorithm. In practice the total variation minimization step is never solved exactly.  Instead, an accuracy parameter is specified and the total variation minimization terminates once this level of accuracy is reached. The choice of this parameter can vastly impact both the computational time of the overall algorithm as well as the accuracy of the result. Moreover,  since the total variation minimization step is not solved exactly, the algorithm is not guarantied to be monotonic. In the present  work we introduce a new adaptive stopping condition for the total variation minimization that guarantees monotonicity. This results in an algorithm that is actually monotonic in practice and is also significantly faster than previous, non-adaptive algorithms.  }

%%%%%%%%%%% 

\section{Introduction}
Recent works \cite{SB09, pro:SzlamBresson10,pro:HeinBuhler10OneSpec, pro:HeinSetzer11TightCheeger,
art:BertozziFlenner11DiffuseClassif,
art:BressonTaiChanSzlam12TransLearn, pro:Rang-Hein-constrained, VGB12, MKB12, BLUV12}  have exploited advances in total variation minimization, originally developed for applications in image processing, to tackle fundamental problems in machine learning. The total variation of an image, described by a function $f(x,y):[0,1] \times [0,1] \to \mathbb{R},$ is given by
  \begin{equation} \label{TV1}
 \|f\|_{TV}= \int_{[0,1] \times [0,1]} |\nabla f(x,y)| \; \rd x \rd y.
 \end{equation}
 The total variation can also be given a sense in the context of graph theory: given a weighed graph  with vertices  $V=\{x_i, \ldots, x_n\}$ and weights  $\{w_{i,j}\}_{ 1\le i,j\le n}$ on its edges,
  the total variation of a function $f: V \to \real$,  is given by
 \begin{equation} \label{TV2}
 \|f\|_{TV}= \sum_{i,j} w_{ij} |f(x_i)-f(x_j)|.
\end{equation}
 Minimizing   energies involving  \eqref{TV1} or \eqref{TV2} is  challenging  due to the  nonlinear and non-differentiable nature of the problems.  In the past five years however, important mathematical breakthroughs together with faster computers have given rise  to efficient algorithms for total variation minimization \cite{art:GoldsteinOsher09SB,BT09,art:ChambollePock11FastPD}. These advances have opened many possibilities in imaging sciences, and nowadays the total variation functional plays a central role  in image processing for de-noising and segmentation problems. Recent works \cite{SB09, pro:SzlamBresson10,pro:HeinBuhler10OneSpec, pro:HeinSetzer11TightCheeger,
art:BertozziFlenner11DiffuseClassif,
art:BressonTaiChanSzlam12TransLearn, pro:Rang-Hein-constrained, VGB12, MKB12, BLUV12}  have applied total variation techniques in machine learning and demonstrated they represent a set of very promising tools that we broadly refer to as ``Total Variation Clustering.''  

Given a set of data points $V=\{ x_1, \cdots, x_n\}$ and similarity weights $\{w_{i,j}\}_{ 1\le i,j\le n}$ between these data points, the Balance Cut Problem \cite{art:Cheeger70RatioCut, book:Chung97Spectral} is:
  \begin{equation}\label{cc}
 \text{Minimize } \;\;  \mathcal{C}(S):= \frac{ \text{Cut}(S,S^c) }{ \min( |S|, |S^c|)} \qquad
 \text{over all subsets $S\subsetneq V$}. %\nonumber
\end{equation}
Here the numerator  $\text{Cut}(S,S^c)$ stands for  $\sum_{x_i \in S, x_j \in S^c}  w_{i,j}$,  and the term $|S|$ in the denominator denotes the number of data points in $S$. The balance cut problem \eqref{cc} attempts to partition the dataset into two groups of comparable size that are weakly linked. The Balanced Cut problem is an NP-hard problem. However, several recent works \cite{SB09, BLUV12} have shown that the combinatorial problem \eqref{cc} is equivalent to the following \emph{continuous relaxation}
\begin{align} \label{rc2}
\text{Minimize}  \;\;  E_{}(f) := \frac{||f||_{TV}}{ ||f - \m(f) \ones||_{1} }  \qquad  \text{over all non-constant $f \in \mathbb{R}^n$},
\end{align}
called the TV-Balanced Cut. Here $||f||_{1} = \sum_{i} |f_i|$ denotes the $\ell_1$ norm of $f$ and $\m(f)$ denotes the median of $f$, i.e. the $n/2$ smallest entry when $n$ is even. The problem \eqref{rc2} is non-convex, but is provably equivalent to the original problem. Specifically, a one-to-one correspondence exists between the global minimizers of each problem. Moreover, the continuous problem is much easier to optimize. The lack of convexity means that the resulting optimization can have difficulties with local minima, however.

Several algorithms have appeared that attempt to minimize the TV-Balanced Cut. In this work we propose a new adaptive total variation algorithm that, to the best of our knowledge, provides the fastest and most reliable approach. 
%The following idealized algorithm was proposed in \cite{BLUV12}: 
Our previous algorithm \cite{BLUV12} utilized a sequence of ``inner'' total variation minimizations to guarantee descent of the TV-Balanced cut energy as well as convergence of the algorithm:

\begin{algorithm}
\label{algo1}
\caption{TV algorithm for computing the Balanced Cut}
\vspace{.1cm}
\begin{center}
\begin{algorithmic}
\STATE $f^{0} $ non-constant function with $\m(f) = 0$ and $||f^0||_{2} = 1$. \\
%\WHILE{ $||f^{k+1} - f^{k} ||_{2} \geq \mathrm{TOL}$ }
\WHILE{$E(f^{k}) - E(f^{k+1})  \geq \mathrm{TOL}$ }
\STATE $v^k \in \partial_{0} ||f^k||_{1}$ \\
\STATE{{$g^{k}=f^{k}+v^k$}}\\
\STATE{$ h^k= \underset{u\in\R^n}{\arg\min}  \{ ||u||_{TV}+\frac{E(f^k)}{2}||u-g^{k}||_2^2  \}$}\\
%\STATE{$\h^k \in \HH(f^k)$} \\
\STATE{$h^{k}_{0} =h^k - \m(h^k)\ones$}
 \STATE{$f^{k+1}= \frac{h^k_0}{\|h^k_0\|_2} $}
\ENDWHILE
\end{algorithmic}
\end{center}
\end{algorithm}
In practice the total variation minimization step (also known as the ROF problem 
\cite{art:RudinOsherFatemi92ROF}),  
\begin{equation}
h^k= \underset{u\in\R^n}{\arg\min} \left\{ ||u||_{TV}+\frac{E(f^k)}{2}||u-g^{k}||_2^2 \right\}
\label{TVstep}
\end{equation}
is never solved exactly. Instead, a total variation minimization algorithm, such as those proposed in \cite{art:GoldsteinOsher09SB,BT09,art:ChambollePock11FastPD}, will generate a sequence of iterates $\{h^k_i\}_{i=1}^{\infty}$ that converge toward the exact solution $h^k$ defined by \eqref{TVstep}.
An accuracy parameter $\epsilon>0$ is then specified and the total variation minimization algorithm terminates once 
\begin{equation}
\|h_{i+1}^k-h_i^k\|_2 \le \epsilon. \label{old_stopping_criteria}
\end{equation}
%This lead to the following algorithm proposed in \cite{BLUV12}:
%\begin{algorithm}[h!]
%\caption{TV algorithm for computing the Balanced Cut \cite{BLUV12}}
%\vspace{.1cm}
%\begin{center}
%\begin{algorithmic}
%\STATE $f^{0} $ non-constant function with $\m(f) = 0$ and $||f^0||_{2} = 1$, $\epsilon>0$ \\
%%\WHILE{ $||f^{k+1} - f^{k} ||_{2} \geq \mathrm{TOL}$ }
%\WHILE{$E(f^{k}) - E(f^{k+1})  \geq \mathrm{TOL}$ }
%\STATE $v^k \in \partial_{0} ||f^k||_{1}$ \\
%\STATE{{$g^{k}=f^{k}+\, v^k$}}\\
%\STATE{ Solve $h^k \approx \underset{u\in\R^n}{\arg\min}  \{ ||u||_{TV}+\frac{E(f^k)}{2}||u-g^{k}||_2^2  \}$ until 
%\vspace{-0.2cm}
%	$$\|h_{i+1}^k-h_i^k\|_2 \le \epsilon$$ }\\
%	\vspace{-0.2cm}
%%\STATE{$\h^k \in \HH(f^k)$} \\
%\STATE{$h^{k}_{0} =h^k - \m(h^k)\ones$}
% \STATE{$f^{k+1}= \frac{h^k_0}{\|h^k_0\|_2} $}
%\ENDWHILE
%\end{algorithmic}
%\end{center}
%\label{algo15}
%\end{algorithm}
%
%\newpage

 The choice of the parameter $\epsilon$ can vastly impact both the computational time of the overall algorithm as well as the accuracy of the result. It remains unclear how to properly choose the level of accuracy to obtain the right balance between these two aims. In addition all theoretical properties of this algorithm, along with any other algorithm proposed for the TV-Balanced Cut, are derived under the assumption that the total variation solution is exactly obtained. They therefore no longer hold in the actual implementation of the algorithm.  The most important of these properties is monotonicity, i.e. that the TV-Balanced Cut energy is guaranteed to decrease $E(f^{k+1}) > E(f^k)$ at every outer iteration.  In this work, we propose an adaptive stopping condition for the total variation minimization that still guarantees monotonicity of the algorithm. This results in an algorithm that is actually monotonic in practice and is more than two times faster on benchmark databases, such as the MNIST database, without sacrificing accuracy of the result. The key idea lies in solving the total variation step only to the amount needed to obtain ``sufficient energy descent,'' where ``sufficient'' has a precise mathematical meaning that guarantees the important theoretical properties of the idealized algorithm still hold. 

\section{The Proposed Algorithm}
%\blue{We now present }
We propose to replace the stopping condition \eqref{old_stopping_criteria}, which is used by all TV-Balanced cut algorithms to date  \cite{SB09, pro:SzlamBresson10,pro:HeinBuhler10OneSpec, pro:HeinSetzer11TightCheeger, BLUV12}, by an adaptive stopping condition that guarantees monotonicity and results in a significantly more efficient algorithm overall. The genesis of this idea lies in the following energy inequality
\begin{equation}
E(f^k) \geq E(h^k) + \frac{E(f^k) ||h^k - f^k||^2_2 }{|| h^k - \m(h^k)\ones||_{1} }
\label{eq:inequality}
\end{equation}
that holds for the idealized algorithm above. See \cite{BLUV12} for a proof of this result. This inequality guarantees that the energy $E(f)$ decreases by at least
$$
\frac{E(f^k) ||h^k - f^k||^2_2 }{|| h^k - \m(h^k)\ones||_{1} }
$$
at every iteration. Moreover, this energy inequality forms the basis of the proof for the theoretical properties of the idealized algorithm.  

Our adaptive stopping condition simply uses a relaxed version of this inequality \eqref{eq:inequality}. Fix $\theta \in (0,1)$ and let $\{h^k_i\}_{i=1}^{\infty}$ denote the sequence of iterates generated by a total variation minimization  algorithm solving the inner problem \eqref{TVstep}. Since $\lim_{i\to +\infty} h^k_i = h^k$, we have
\begin{equation} \label{bla}
\lim_{i\to +\infty}\left\{ E(h_i^k) + \frac{\theta E(f^k) ||h_i^k - f^k||^2_2 }{|| h_i^k - \m(h_i^k)\ones||_{1} }\right\} =E(h^k) + \frac{\theta E(f^k) ||h^k - f^k||^2_2 }{|| h^k - \m(h^k)\ones||_{1}} < E(f^k).
\end{equation}
The above equality comes from  the continuity of each of the following: the energy $E$; the median; the $\ell_1$ norm; and the $\ell_2$ norm. That \eqref{bla} holds with strict inequality follows as a consequence of \eqref{eq:inequality} together with the fact that $\theta < 1$. From \eqref{bla} it is clear that for $i$ large enough the following holds:
\begin{equation} \label{new_stopping_criteria}
 E(h_i^k) + \frac{\theta E(f^k) ||h_i^k - f^k||^2_2 }{|| h_i^k - \m(h_i^k)\ones||_{1} }< E(f^k).
\end{equation}
In this work, we propose to use inequality \eqref{new_stopping_criteria}  as the stopping criteria when solving the inner problem \eqref{TVstep}.
This leads to the proposed algorithm:
\begin{algorithm}
\caption{Adaptive TV algorithm for computing the Balanced Cut}
\vspace{.1cm}
\begin{center}
\begin{algorithmic}
\STATE $f^{0} $ non-constant function with $\m(f) = 0$ and $||f^0||_{2} = 1$, $\theta=.99$. \\
%\WHILE{ $||f^{k+1} - f^{k} ||_{2} \geq \mathrm{TOL}$ }
\WHILE{$E(f^{k}) - E(f^{k+1})  \geq \mathrm{TOL}$ }
\STATE $v^k \in \partial_{0} ||f^k||_{1}$ \\
\STATE{{$g^{k}=f^{k}+\, v^k$}}\\
\STATE{ Solve $h^k \approx \underset{u\in\R^n}{\arg\min}  \{ ||u||_{TV}+\frac{E(f^k)}{2}||u-g^{k}||_2^2  \}$ until 
	$$E(f^k) > E(h^k) + \frac{\theta\; E(f^k) ||h^k - f^k||^2_2 }{|| h^k - \m(h^k)\ones||_{1} }$$ }\\
%\STATE{$\h^k \in \HH(f^k)$} \\
\STATE{$h^{k}_{0} =h^k - \m(h^k)\ones$}
 \STATE{$f^{k+1}= \frac{h^k_0}{\|h^k_0\|_2} $}
\ENDWHILE
\end{algorithmic}
\end{center}
\label{algo2}
\end{algorithm}

The notation $v^k \in \partial_{0} ||f^k||_{1}$ means that $v^k$ denotes any element of the \emph{sub-differential} $\partial ||f^k||_{1}$ of the $\ell_1$-norm at $f^k$ that has zero mean. Note that $\partial_{0} ||f^k||_{1}$ is never empty due to the fact that $f^k$ has zero median. Indeed, we can take the particular choice of $v^k \in \R^n$ due to \cite{pro:HeinBuhler10OneSpec},
\begin{equation}
v^k(x_{i}) := \begin{cases} 
\mathrm{sign}( f^k(x_i) ) & \text{ if } f^k(x_i) \neq 0 \\
(n^- -n^+ )/(n_0) & \text{ if } f^k(x_i) = 0
\end{cases},
\end{equation} 
where $n^+$, $n^-$ and $n^0$ denote the number of elements in the sets $\{x_i: f(x_i)>0\}$, $\{x_i: f(x_i)>0\}$ and $\{x_i: f(x_i)=0\}$, respectively. Other possible choices also exist, so that $v^k$ is not uniquely defined. This idea, i.e. choosing an element from the sub-differential with mean zero, was introduced in  \cite{pro:HeinBuhler10OneSpec} and proves indispensable when dealing with median zero functions.

We choose the parameter $\theta$ close to one, e.g. $\theta = 0.99$, in our implementation of the proposed algorithm. We keep $\theta$ strictly smaller than one so that we can guarantee the stopping condition \eqref{new_stopping_criteria} is, in fact, reached in a finite number of iterations. Our experiments have indicated that a larger choice for $\theta$ leads to a more efficient algorithm. In the actual implementation of the algorithm we do not observe any difference between choosing $\theta=0.99$, $\theta=0.999$ or $\theta=0.9999$.

\

The new stopping criterion \eqref{new_stopping_criteria} has three significant advantages over the more traditional stopping criterion \eqref{old_stopping_criteria} used in \cite{SB09, pro:SzlamBresson10,pro:HeinBuhler10OneSpec, pro:HeinSetzer11TightCheeger, BLUV12}.
\begin{enumerate}
\item \underline{Monotonicity}: With the new stopping criterion \eqref{new_stopping_criteria}  the energy $E(f)$ is guaranteed to decrease at every step of the outer loop. In other words, the algorithm as implemented is now truly monotonic. Indeed, the stopping condition \eqref{new_stopping_criteria} was specially designed to achieve this. The fixed, non-adaptive condition \eqref{old_stopping_criteria} simply does not guarantee monotonicity in the implemented algorithm.
\item  \underline{Robustness with Respect to Choice of Parameters}: We observe in our experiments that the adaptive algorithm is not sensitive to choice of the parameter $\theta$ as long as $\theta \approx 1$ and $\theta<1$. Specifically, $\theta = .99$ (or $\theta = .999$) is nearly optimal for any dataset. This markedly contrasts with the old stopping criterion \eqref{old_stopping_criteria};  the non-adaptive algorithm is very sensitive to the choice of the parameter $\epsilon$ in terms of both accuracy and efficiency. Moreover, the proper choice of $\epsilon$ may vary significantly between two different datasets. %The adaptive algorithm with the new stopping criteria the algorithm is essentially parameterless (except for the parameter TOL).
\item  \underline{Speed}:  The proposed algorithm  is adaptive in the sense that it does not waste computational effort in solving the inner loop to a greater precision than needed. In contrast, the non-adaptive algorithm solves the inner problem to the same degree of precision at every outer step of the algorithm. Overall this results in a significant gain in efficiency.%: for example on the benchmark databases such as the MNIST database.  
\end{enumerate}

\section{Notation and Properties of the Algorithm}

In this section we first provide the complete, formalized implementation details for the algorithm described above. We then proceed to develop its mathematical properties.
\subsection{Notation}

First, we recall the definition of the subdifferentials of the TV semi-norm $||f||_{TV}$ and the $\ell_1$ norm $||f||_{1}$ at $f$:
\begin{align}
\partial ||f||_{TV} &:= \left\{ v \in \mathbb{R}^n : ||g||_{TV} - ||f||_{TV} \geq \langle v,g-f \rangle \; \forall g \in \R^n \right\}, \\
\partial ||f||_{1} &:= \left\{v \in \mathbb{R}^n : ||g||_{1} - ||f||_{1} \geq \langle v,g-f \rangle \; \forall g \in \R^n \right\}.
\end{align}
We denote by $\partial_{0} ||f||_{1}$ those elements of the subdifferential $\partial ||f||_{1}$ that have zero mean. As the successive iterates $f^k$ have zero median, $\partial_0 ||f^k||_{1}$ is never empty. For example, we can take $v^k \in \R^n$ so that $v^k(x_i) =1$ if $f(x_i) > 0$,  $v^k(x_i) = -1$ if  $f(x_i) < 0$ and $v^k(x_i) = (n^- -n^+ )/(n_0)$ if $f(x_i) = 0$ where $n^+$, $n^-$ and $n^0$ denote the number of vertices in the sets $\{x_i: f(x_i)>0\}$, $\{x_i: f(x_i)>0\}$ and $\{x_i: f(x_i)=0\}$, respectively.

We next precisely define the approximate total variation step
\begin{equation*}
h^k := \underset{u\in\R^n}{\mathrm{approx} \arg\min}  \; \left\{ ||u||_{TV} +\frac{E(f^k)}{2}||u-g^{k}||_2^2 \right\}
\end{equation*}
that we previously described. From our previous work \cite{BLUV12}, we know that if $H^k$ denotes the (unique) exact solution to the total variation minimization problem,
\begin{equation}
H^k := \underset{u\in\R^n}{\arg\min}  \left\{ ||u||_{TV}+\frac{E(f^k)}{2}||u-g^{k}||_2^2  \right\},
\label{eq:exact}
\end{equation}
then $H^k$ satisfies the energy inequality \eqref{eq:inequality}
\begin{equation}
E(f^k) \geq E(H^k) + \frac{E(f^k) ||H^k - f^k||^2_2}{||H^k - \m(H^k)\ones ||_{1}}.
\end{equation}
In particular, we have that $E(f^k) > E(H^k)$ unless $H^k = f^k$, i.e. $f^k$ itself is the solution to the total variation minimization. In the latter case, it follows from the definition of $H^k$ that there exists $w^k \in \partial ||f^k||_{TV}$ so that
\begin{equation*}
0 = w^k + E(f^k)(f^k - g^k) =  w^k + E(f^k)( f^k - f^k - v^k) = w^k - E(f^k)v^k,
\end{equation*}
which implies the current iterate $f^k$ is a critical point of the energy. 

Turning now to the approximate case, let
\begin{equation*}
\left\{ \Phi^m( E(f^k); g^k ) \right\}^{\infty}_{m=1} \qquad g^k = f^k + v^k \qquad \Phi^1( E(f^k); g^k )  = f^k
\end{equation*}
denote a sequence of iterates that converge to the exact solution starting from the initial point $f^k$, i.e. 
$$
\Phi^m( E(f^k); g^k ) \to H^k \qquad \text{as} \qquad m \to \infty.
$$
In what follows, we use the shorthand $\Phi^m_k$ to denote $\Phi^m( E(f^k); g^k )$. If $H^k \neq f^k,$ the continuity of the energy $E$, the median, the $\ell_1$ norm and the $\ell_2$ norm combine to show that for any $\theta \in (0,1)$ there exists a finite $M_k$ with the following property:
\begin{align*}
E(f^k) &\leq E(\Phi^m_k) +  \frac{\theta \; E(f^k) ||\Phi^m_k - f^k||^2_2}{||\Phi^m_k - \m(\Phi^m_k) \ones ||_1 }   \quad \text{if} \quad 2 \leq m \leq M_k-1 \\
E(f^k) &> E(\Phi^{M_k}_k) + \frac{\theta \; E(f^k) ||\Phi^{M_k}_k - f^k||^2_2}{||\Phi^{M_k}_k - \m(\Phi^{M_k}_k) \ones ||_1 }.
\end{align*}
We can only guarantee such an $M_k$ exists provided $\theta < 1$, so in practice we take a value of $\theta$ close to one, e.g. $\theta = .99$, as we have found this works best in practice. We then define
\begin{align*}
\underset{u\in\R^n}{\mathrm{approx} \arg\min}  \; \left\{ ||u||_{TV} +\frac{E(f^k)}{2}||u-g^{k}||_2^2 \right\} &:= \Phi^{M_k}_{k} \qquad \text{if} \qquad H^k \neq f^k \\
\underset{u\in\R^n}{\mathrm{approx} \arg\min}  \; \left\{ ||u||_{TV} +\frac{E(f^k)}{2}||u-g^{k}||_2^2 \right\} &:= f^{k} \qquad \; \; \text{ if} \qquad H^k = f^k.
\end{align*}
In the second case, i.e. when $f^{k+1} = H^k = f^k$, we terminate the outer loop as well since the algorithm has reached a critical point of the energy. In practice, we set a maximum number of iterations $m \leq M_{\max}$ that, if reached, signifies the ``exact'' solution of \eqref{eq:exact} has been found.

\subsection{Properties of the Approximate Algorithm}
We now proceed to demonstrate that, due to the control afforded us by the energy inequality, the approximate total variation algorithm still enjoys many of the same mathematical properties of the our previous idealized algorithm. We first demonstrate that the intermediate steps $(h^k,h^k_0)$ in the iteration remain in a compact set. If $h^k = f^k$ then obviously $||h^k||_{2} = ||f^k||_{2} = 1$ by definition of the iterates. Otherwise $h^k$ satisfies the energy inequality
\begin{equation*}
E(f^k) > E(h^k) + \frac{\theta \; E(f^k) ||h^k - f^k||^2_2  }{||h^{k} - \m(h^k) \ones ||_1 }.
\end{equation*}
Note that each of the iterates $f^k$ belong to the closed subset
\begin{equation}
\Sn_0 := \{ f \in \R^n : ||f||_2 = 1 \quad \text{and} \quad \m(f) = 0 \}.
\end{equation}
of the $\ell_2$ sphere. As $\Sn_0$ does not contain any constant functions and we assume a connected graph, $E(f)>0$ for all $f \in \Sn_0$. Moreover, since $\Sn_0$ is a closed set on which $E$ is continuous,  $E$ attains a strictly positive minimum $E(f) \geq E(f^*) = \alpha$ on $\Sn_0$, so that $E(f^0) \geq E(f^k) \geq \alpha$ uniformly for all iterates. A combination of this fact with the triangle inequality and the facts that $||x||_{1} \leq \sqrt{n} ||x||_{2}$ and $|\m(x)| \leq ||x||_{2}$ for all $x \in \mathbb{R}^n$ then demonstrates 
\begin{equation*}
||h^k - f^k ||^2_2 \leq \frac{E(f^0) }{\theta \alpha} ||h^{k} - \m(h^k) \ones ||_1 \leq \frac{ E(f^0) }{\theta \alpha} \sqrt{n} (1 + \sqrt{n})||h^k||_{2}.
\end{equation*}
By expanding the inner-product on the left hand side this reveals
\begin{equation*}
||h^k||^2 - 2\langle h^k,f^k \rangle + 1 \leq \frac{E(f^0) }{\theta \alpha} ||h^{k} - \m(h^k) \ones ||_1 \leq \frac{ E(f^0) }{\theta \alpha} \sqrt{n} (1 + \sqrt{n})||h^k||_{2},
\end{equation*}
which by Cauchy-Schwarz implies
\begin{equation*}
||h^k||^{2}_{2} < ||h^k||^2_2 + 1 \leq \left( 2 + \frac{E(f^0) }{\theta \alpha} \sqrt{n} (1 + \sqrt{n}) \right) ||h^k||_{2}.
\end{equation*} 
Dividing by $||h^k||_{2}$ then yields the desired estimate that holds for all $k \geq 0$:
\begin{equation*}
||h^k|| < \left( 2 + \frac{E(f^0) }{\theta \alpha} \sqrt{n} (1 + \sqrt{n}) \right).
\end{equation*} 
In other words, the iterates $h^k$  lies in a fixed, compact set. Arguing as in \cite{BLUV12}, this allows us to obtain

\begin{lemma}[Compactness of $\A_{\mathrm{SD}}$]\label{lem:Comp_SD}
Let $f^0 \in \Sn_0$ and define a sequence of iterates $(g^k,h^k,h^k_0,f^{k+1})$ according to the approximate algorithm. Then there exists an $R>0$ independent of $k$ so that
\begin{equation}
||h^k||_{2} \leq R \quad \text{and} \quad 0 < ||h^k_0||_2 \leq (1+\sqrt{n})||h^k||_2.
\label{upperbound}
\end{equation}
Moreover, we have
\begin{equation} \label{attractor}
||h^k - f^k||_2 \to 0, \qquad \m(h^k) \to 0, \qquad   \|f^k-f^{k+1}\|_2 \to 0.
 \end{equation}
%Therefore $\Sn_0$ attracts the sequences $\{h^k\}$ and $\{h^k_0\}$.
\end{lemma}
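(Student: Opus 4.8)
The plan is to separate the claim into the uniform bounds \eqref{upperbound}, which are essentially already established, and the three limits \eqref{attractor}, which I would derive by converting the adaptive stopping condition into a statement about the decrease of the energy. For \eqref{upperbound} I would simply invoke the estimate derived in the paragraph preceding the lemma, which gives $\|h^k\|_2 < 2 + \frac{E(f^0)}{\theta\alpha}\sqrt n(1+\sqrt n) =: R$ uniformly in $k$, the uniformity coming from $E(f^0) \ge E(f^k) \ge \alpha$ for every iterate. The companion bound $\|h^k_0\|_2 \le (1+\sqrt n)\|h^k\|_2$ then follows from $h^k_0 = h^k - \m(h^k)\ones$, the triangle inequality, $\|\ones\|_2 = \sqrt n$, and $|\m(h^k)| \le \|h^k\|_2$; the strict positivity $\|h^k_0\|_2 > 0$ records that $h^k$ is non-constant, which is needed both for $E(h^k)$ to be defined and for $f^{k+1}$ to exist.

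For \eqref{attractor} the crucial preliminary observation is that $E$ is invariant under adding a constant function and under multiplication by a positive scalar: the $TV$ semi-norm and the $\ell_1$ term in the denominator are both homogeneous, and the median shifts by the same constant. Hence $E(f^{k+1}) = E(h^k_0/\|h^k_0\|_2) = E(h^k)$, and the stopping condition of Algorithm \ref{algo2} rewrites as
$$E(f^k) - E(f^{k+1}) > \frac{\theta\, E(f^k)\,\|h^k - f^k\|_2^2}{\|h^k - \m(h^k)\ones\|_1}$$
whenever $h^k \neq f^k$; in the complementary case the outer loop terminates and the three limits hold trivially for the eventually constant sequence. The sequence $\{E(f^k)\}$ is therefore non-increasing and bounded below by $\alpha > 0$, hence convergent, which forces $E(f^k) - E(f^{k+1}) \to 0$.

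From this gap all three limits follow by short estimates. Using $E(f^k) \ge \alpha$ together with $\|h^k - \m(h^k)\ones\|_1 = \|h^k_0\|_1 \le \sqrt n\,\|h^k_0\|_2 \le \sqrt n(1+\sqrt n)R$ from \eqref{upperbound}, the displayed inequality rearranges to
$$\|h^k - f^k\|_2^2 \le \frac{\sqrt n(1+\sqrt n)R}{\theta\alpha}\bigl(E(f^k) - E(f^{k+1})\bigr) \to 0,$$
which is the first limit. The second, $\m(h^k) \to 0$, follows because the median is an order statistic and hence $1$-Lipschitz for $\|\cdot\|_\infty$, so $|\m(h^k)| = |\m(h^k) - \m(f^k)| \le \|h^k - f^k\|_\infty \le \|h^k - f^k\|_2 \to 0$ using $\m(f^k) = 0$.

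The third limit needs slightly more care because $f^{k+1} = h^k_0/\|h^k_0\|_2$ divides by $\|h^k_0\|_2$. Combining the first two limits gives $\|h^k_0 - f^k\|_2 \le \|h^k - f^k\|_2 + \sqrt n\,|\m(h^k)| \to 0$, and since $\|f^k\|_2 = 1$ the reverse triangle inequality yields $\|h^k_0\|_2 \to 1$, so the normalization is eventually bounded away from zero. Writing $h^k_0 = f^k + \eta^k$ with $\|\eta^k\|_2 \to 0$ and $s_k := \|h^k_0\|_2 \to 1$, one finds $f^{k+1} - f^k = s_k^{-1}\bigl((1-s_k)f^k + \eta^k\bigr)$ and hence $\|f^{k+1} - f^k\|_2 \le s_k^{-1}\bigl(|1-s_k| + \|\eta^k\|_2\bigr) \to 0$. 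I expect the only genuine obstacle to be conceptual rather than computational: recognizing that the scale and translation invariance of $E$ turns the adaptive stopping rule into a true energy gap, after which the monotone, bounded-below energy sequence drives every convergence in \eqref{attractor}.
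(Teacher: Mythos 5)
Your proof is correct and follows essentially the same route as the paper's: both invoke the uniform bound established just before the lemma, then convert the adaptive stopping inequality into an energy gap $E(f^k)-E(f^{k+1})$ that vanishes because the energy sequence is monotone and bounded below by $\alpha>0$, and propagate $\|h^k-f^k\|_2\to 0$ to the remaining two limits. The only cosmetic differences are that you make explicit the scale/translation invariance giving $E(f^{k+1})=E(h^k)$ (which the paper uses silently) and you bound the median via its $1$-Lipschitz property in $\|\cdot\|_\infty$ rather than the paper's limit-point/continuity argument --- a slightly more direct and quantitative version of the same step.
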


\begin{proof}
The first statement follows from the preceeding uniform compactness argument. That $0 < ||h^k_0||_2$ follows since $h^k$ is not constant. Indeed, if $h^k = f^k$ then $h^k \in \Sn_0$ and is therefore not constant. Otherwise, that $h^k$ satisfies the energy inequality implies $||h^k - \m(h^k)\ones||_1 > 0$ and again $h^k$ is not constant. The upper bound $||h^{k}_{0}||_2 \leq (1+\sqrt{n})||h^k||_2$ follows from the triangle inequality. For the second statement, as $f^k \in \Sn_0$ it follows that $E(f^k) \geq \alpha > 0$. From the energy inequality,
\begin{equation} \label{descent2}
 {\| h^k-f^k\|_2^2}  \le\frac{c}{\alpha \theta}  || h^k - \m(h^k)\ones||_{1} (E(f^k)-E(f^{k+1})) \leq C(E(f^k)-E(f^{k+1})) \to 0,
\end{equation}
for some universal constant $C$, due to uniform compactness of the iterates. Convergence to zero follows as $E(f^k)$ is decreasing and bounded from below, and therefore converges.  By continuity of the median and the fact that $\m(f^k) = 0,$ any limit point of the $\{ f^k \}$ must have median zero. As $\| h^k-f^k\|_2^2 \to 0,$ any limit point of the $\{ h^k \}$ must also have median zero, which implies that $\m(h^k) \to 0$ as well. The triangle inequality then implies $||h^k_0 - f^k||_{2} \to 0,$ so that $||h^k_0||_{2} \to 1$ and $||f^{k+1}-f^k||_2 \to 0$ as desired.
\end{proof}

As a consequence of this lemma, we obtain the following corollary that shows the approximate algorithm and the idealized algorithm from \cite{BLUV12} share the same global convergence properties:
\begin{corollary}\label{conv-thm}
Take $f^0 \in \Sn_0$ and let $\{f^k\}$ denote any sequence defined through the approximate total variation algorithm. Then either the sequence $\{f^k \}$ converges or the set of accumulation points form a continuum in $\Sn_0$.
\end{corollary}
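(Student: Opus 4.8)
The plan is to establish this dichotomy as a purely topological consequence of the two facts furnished by Lemma~\ref{lem:Comp_SD}: the iterates lie in the compact set $\Sn_0$, and they are \emph{asymptotically regular}, meaning $\|f^{k+1}-f^k\|_2 \to 0$. Let $\Omega \subset \Sn_0$ denote the set of accumulation points of $\{f^k\}$. Since $\Sn_0$ is compact, $\Omega$ is nonempty, and it is closed (hence itself compact) as the accumulation set of any sequence. The argument then splits according to whether $\Omega$ is a singleton.

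First I would dispose of the easy branch. If $\Omega = \{p\}$, then $\{f^k\}$ has a unique accumulation point in the compact space $\Sn_0$, which forces $f^k \to p$: were convergence to fail, some subsequence would remain bounded away from $p$, and by compactness that subsequence would itself accumulate at a second point of $\Omega$, a contradiction. Thus in the singleton case the sequence converges. It remains to treat the case $|\Omega| \geq 2$, where I must show that $\Omega$ is connected, i.e.\ a continuum.

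The heart of the proof is the connectedness argument, and this is where asymptotic regularity does the essential work. I would argue by contradiction: suppose $\Omega = A \cup B$ with $A, B$ nonempty, disjoint, and (being closed subsets of the compact $\Omega$) compact, so that $\delta := \mathrm{dist}(A,B) > 0$. Since both $A$ and $B$ contain accumulation points, the sequence enters the $\delta/3$-neighborhood of $A$ and that of $B$ infinitely often. Consider the $2$-Lipschitz function $g(x) := \mathrm{dist}(x,A) - \mathrm{dist}(x,B)$; on the $\delta/3$-neighborhood of $A$ the triangle inequality gives $g < -\delta/3$, and on that of $B$ gives $g > \delta/3$, so $g(f^k)$ dips below $-\delta/3$ and rises above $+\delta/3$ for infinitely many $k$. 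Because $g$ is $2$-Lipschitz we have $|g(f^{k+1}) - g(f^k)| \le 2\|f^{k+1}-f^k\|_2 \to 0$, so the scalar sequence $g(f^k)$ oscillates across the gap with increments tending to zero. A discrete intermediate-value argument then extracts a subsequence $f^{k_j}$ along which $g(f^{k_j}) \to 0$; passing to a further subsequence, compactness yields a limit $p \in \Omega$, and continuity of $g$ gives $g(p) = 0$. But $p \in \Omega = A \cup B$ forces $\mathrm{dist}(p,A) = 0$ or $\mathrm{dist}(p,B) = 0$, whence $g(p) = 0$ makes both distances vanish and $p \in A \cap B$, contradicting disjointness. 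Hence no disconnection exists and $\Omega$ is connected.

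I expect the main obstacle to be the careful formalization of the discrete intermediate-value step: one must verify that once the increments $|g(f^{k+1})-g(f^k)|$ drop below a fixed tolerance $\epsilon$, the trajectory of $g(f^k)$ cannot leap directly from below $-\delta/3$ to above $+\delta/3$ without landing within $\epsilon$ of $0$ at some intermediate index, and that letting $\epsilon \to 0$ along ever-later times produces the desired subsequence with $g(f^{k_j}) \to 0$. Everything else---nonemptiness and closedness of $\Omega$, and the singleton-implies-convergence step---is routine, and indeed the entire argument parallels the one given for the idealized algorithm in \cite{BLUV12}, the only inputs required being the two conclusions of Lemma~\ref{lem:Comp_SD}.
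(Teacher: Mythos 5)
Your argument is correct and is essentially the paper's own route: the corollary is deduced purely from the two conclusions of Lemma~\ref{lem:Comp_SD} (compactness of the iterates in $\Sn_0$ and $\|f^{k+1}-f^k\|_2\to 0$) via the standard Ostrowski-type dichotomy, which the paper simply defers to \cite{BLUV12} and which you have reconstructed in full, including the discrete intermediate-value step that the deferred proof relies on.
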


\subsection*{The Critical Point Property}
Next, we turn our attention to characterizing the limit points of the sequence $\{f^k\}$. We wish to establish the critical point property, i.e. that any limit point of $\{f^k\}$ is a critical point of the energy. Specifically, if $f^\infty$ denotes a limit point of $\{ f^k \}$ then there exist $v^\infty \in \partial_{0} ||f^\infty||_1$ and $w^\infty \in \partial ||f^\infty||_{TV}$ so that
$$
0 = w^\infty - E(f^\infty)v^\infty.
$$
To this end, let us suppose that we have a subsequence satisfying
\begin{equation*}
f^{k_j} \to f^\infty \qquad f^{k_j +1} \to f^\infty,
\end{equation*}
where the second statement follows from the statement $||f^{k} - f^{k+1}||_{2} \to 0$ in the previous lemma. Note that the previous lemma implies $h^{k_j},h^{k_j}_{0} \to f^\infty$ as well. As $\{v^{k_j} \}$ lie in a uniform compact set, as each entry of $v^{k_j}$ lies in $[-1,1]$, we can (by passing to a further subsequence if necessary) assume that $v^{k_j} \to v^\infty$ for some $v^\infty \in \mathbb{R}^n$. By definition, for all $g \in \R^n$ we have that
\begin{equation*}
||g||_{1} - ||f^{k_j}||_{1} \geq \langle v^{k_j}, g - f^{k_j} \rangle
\end{equation*}
and $\langle v^{k_j},\ones \rangle = 0$, which by passing to the limit $k_j \to \infty$ in both statements reveals that $v^\infty \in \partial_{0} ||f^\infty||_1$ as well.

Before we can establish the critical point property, we clearly must place at least some assumptions on the total variation solver $\Phi^{m}(E(f),g)$. Specifically, we make three assumptions

\begin{assumption}{\rm (Convergence) }
For every $(E(f),g)$ the solver $\Phi^m(E(f),g)$ is convergent, i.e.
$$
\Phi^m(E(f),g) \to \underset{u \in \mathbb{R}^n }{\arg \min} \left\{ ||u||_{TV} + \frac{E(f)}{2} || u - g||^{2}_{2} \right\} \qquad \text{as} \qquad m \to \infty.
$$
\end{assumption}
\begin{assumption}{ \rm (Continuity of the Iterates) } 
For every $m \ge 1$, the function $(E(f),g) \mapsto \Phi^m(E(f),g)$ is continuous.
\end{assumption}
\begin{assumption}{ \rm (The Semigroup Property) } 
For any $m,n \ge 1$, if $\Phi^n(E(f),g)=\Phi^m(E(f),g)$ then $\Phi^{n+1}(E(f),g)=\Phi^{m+1}(E(f),g)$ as well.
\end{assumption}
\noindent We obviously require the first assumption, while the second assumption is reasonable and does in fact hold for the popular total-variation solvers. The third assumption essentially states that during iterative scheme, the next $\Phi^{m+1}$ is determined entirely by the current iterate $\Phi^{m},$ but not by multiple previous iterates or other auxiliary variables. This assumption fails for many of the popular total variation solvers such as the alternating direction method of multipliers or primal-dual algorithms. It does hold for so-called ``first-order'' solvers, however, such as straightforward gradient-descent, forward-backward splitting schemes or Uzawa  iteration applied to the dual problem. We include it for simplicity in illustrating that, as a proof-of-concept, the control afforded us by the energy inequality allows us to retain in the approximate algorithm \emph{all} convergence properties of the idealized algorithm. We leave the proof in the more general case to future work.

Returning now to establishing the critical point property, assume that $f^\infty$ is not a critical point of the energy. By definition, then,
\begin{equation*}
0 \notin \partial ||f^\infty||_{TV} - E(f^\infty)v^\infty \qquad \Leftrightarrow \qquad 0 \notin \partial ||f^\infty||_{TV} + E(f^\infty)( f^\infty - g^\infty), \qquad g^\infty = f^\infty + v^\infty.
\end{equation*}
In particular,
$$
f^\infty \neq \underset{u\in\R^n}{\arg\min}  \; \left\{ ||u||_{TV}+\frac{E(f^\infty)}{2}||u-g^{\infty}||_2^2 \right\}.
$$
As before define
\begin{equation*}
H^\infty := \underset{u\in\R^n}{\arg\min}  \; \left\{ ||u||_{TV} +\frac{E(f^\infty)}{2}||u-g^{\infty}||_2^2 \right\}
\end{equation*}
along with the corresponding sequence of iterates
\begin{equation*}
\Phi^m(E(f^\infty),g^\infty) \to H^\infty \qquad \text{as} \qquad m \to \infty, \qquad \Phi^1(E(f^\infty),g^\infty) = f^\infty.
\end{equation*}
As $f^\infty \neq H^\infty$ there exists a finite $M$ with the property that (where $\Phi^m_{\infty}$ is shorthand for $\Phi^m(E(f^\infty),g^\infty)$
\begin{align*}
E(f^\infty) &\leq E(\Phi^m_{\infty}) + \frac{\theta \; E(f^\infty) || \Phi^m_{\infty} - f^\infty||^{2}_{2} }{ || \Phi^m_{\infty} - \m(\Phi^m_{\infty})\ones ||_{1} } \quad \text{if} \quad m \leq M-1 \\
E(f^\infty) &> E(\Phi^M_{\infty}) + \frac{\theta \; E(f^\infty) || \Phi^M_{\infty} - f^\infty||^{2}_{2} }{ || \Phi^M_{\infty} - \m(\Phi^M_{\infty})\ones ||_{1} } %\\
\end{align*}
We may suppose that each of the iterates $h^{k_j}$ came from an approximate total variation solve, i.e. $h^{k_j} = \Phi^{M_{k_j}}(E(f^{k_j}),g^{k_j})$ for some finite iteration number $M_{k_j}$, since if this is not the case then the sequence $\{f^k\}$ reaches a critical point of the energy in a finite number of iterations. 

As $f^{k_j} \to f^\infty$, $E(f^{k_j}) \to E(f^\infty)$ and $g^{k_j} \to g^{\infty}$ and the approximate total variation procedure performed at $(E(f^\infty)^\infty, g^\infty)$ terminates in $M$ iterations, we would expect that for $j$ large enough the approximate  total variation procedure at $(E(f^{k_j}),g^{k_j})$ would also terminate in $M$ iterations but here we must be a bit more careful. By the continuity of the iterates $\Phi^m$ we do have that the energy inequality
\begin{align*}
E(f^{k_j}) &> E(\Phi^M_{k_j} ) + \frac{\theta \; E(f^{k_j}) ||\Phi^M_{k_j} - f^{k_j}||^2_2}{|| \Phi^M_{k_j} - \m( \Phi^M_{k_j}) \ones ||_{1}  }
%E(f^{k_j}) &> E(h^{k_j}_{M}) + \frac{\theta \; E(f^{k_j})}{|| h^{k_j}_{M} - \m(h^{k_j}_{M}) \ones ||_{1} } \frac { ||h^{k_j}_{M} - f^{k_j}||^2_2 }{c}
\end{align*}
holds for all $k_j$ sufficiently large. In other words, there exists $J$ so that if $j\geq J$ then $M_{j} \leq M$. As $M_{j} \leq M$ for all $k_j$ sufficiently large, this means that the entire sequence $\{ M_{j} \}^{\infty}_{j=1}$ is, in fact, bounded. We may therefore extract yet another subsequence $k_{j_l}$ so that $M_{j_l} \to M^*$ for some $2\leq M^* \in \mathbb{R}$. However, as the $M_{j_l}$ form a Cauchy sequence and are also integers (so, $|M_{j_l} - M_{j_{l^\prime}}| \geq 1$ unless they are equal) this implies that in fact $M_{j_l} \equiv M^* \in \mathbb{N}$ for all $l$ sufficiently large. So along this subsequence we also have
$$
h^{k_{j_l}} = \Phi^{M^*}_{{k_{j_l}}} , \qquad h^{k_{j_l}},h^{k_{j_l}}_{0},f^{k_{j_l}+1}  \to f^\infty, \qquad v^{k_{j_l}} \to v^\infty.
$$
That is, for $l$ large enough the terminating index does not change. As $h^{k_{j_l}} \to f^\infty$, it follows from continuity of the iterates that
$$
\Phi^{M^*}(E(f^\infty),g^\infty) = f^\infty = \Phi^{1}(E(f^\infty),g^\infty).
$$
By the semigroup property, for any $n \in \mathbb{N}$ it follows that $\Phi^{1 + n(M^*-1)}(E(f^\infty),g^\infty) = f^\infty$ as well. In particular, $f^\infty$ appears infinitely often. As $\Phi^m$ converges as $m \to \infty$, we then necessarily have  $\Phi^m(E(f^\infty),g^\infty) =f^\infty$ for all $m$ and $\lim_{m\to \infty}\Phi^m(E(f^\infty),g^\infty)=f^{\infty}$, that is:
\begin{equation}
f^\infty = \underset{u\in\R^n}{\arg\min}  \; \left\{ ||u||_{TV}+\frac{E(f^\infty)}{2}||u-g^{\infty}||_2^2 \right\}.
\label{eq:not_cv}
\end{equation}
This contradicts the assumption that $f^\infty$ is not a critical point of the energy, which completes the proof.

\begin{remark}
While the semigroup assumption suffices to establish the critical point property, it often proves too restrictive. If instead we establish the existence of a strictly monotone quantity $F$, i.e. $F(\Phi^{m}) > F(\Phi^{m+1})$ unless $\Phi^{m} = \Phi^{m+1}$, such as the total variation energy or the residual then the same proof works even in the absence of the semigroup property.
\end{remark}

\section{A stopping condition for the inner TV problem which does not involve computing the median}
In this section we present an alternative approximate total variation algorithm that avoids having to compute the energy $E(\Phi^{m})$ at each iteration of the total variation solver. The motivation for this lies in the fact that other total variation clustering problems, such as TV-Normalized Cut, rely on energies with weighted medians that are expensive to compute. An algorithm that avoids this extra computation, yet still satisfies the energy inequality, would therefore produce an additional gain in efficiency. We develop this idea for the TV-Balanced Cut problem; the idea extends in a straightforward fashion to other total variation clustering problems.

If we solve the inner total variation problem exactly, i.e. we compute
$$
H^k := \underset{ u \in \mathbb{R}^n }{\arg \min} \left\{ ||u||_{TV} + \frac{E(f^k)}{2}||u - g^k||^2_2 \right\},
$$
then we have that
$$
E(f^k)\left(g^k - H^k\right) \in \partial ||H^k||_{TV}.
$$
In particular, this implies that
$$
||f^k||_{TV} \geq ||H^k||_{TV} + E(f^k)\langle g^k - H^k,f^k - H^k \rangle = ||H^k||_{TV} + E(f^k)|| H^k - f^k||^2_2  - E(f^k) \langle v^k,H^k - f^k\rangle.
$$
Now let $\theta = .99$ and $\Phi^{m}_{k} := \Phi^{m}(E(f^k),g^k) \to H^k$ when $m \to \infty$ as before. If $H^k = f^k$ then $f^k$ is a critical point of the energy and we terminate the algorithm. Otherwise $H^k \neq f^k$ so there exists a finite $M_k$ with the property that
\begin{align}
||f^k||_{TV} &\leq ||\Phi^{m}_{k} ||_{TV} + \theta \; E(f^k)|| \Phi^{m}_{k} - f^k||^2_2  - E(f^k) \langle v^k,\Phi^{m}_{k} - f^k\rangle \qquad 2 \leq m \leq M_k-1 \\
||f^k||_{TV} &> ||\Phi^{M_k}_{k}||_{TV} + \theta \; E(f^k)|| \Phi^{M_k}_{k} - f^k||^2_2  - E(f^k) \langle v^k,\Phi^{M_k}_{k} - f^k\rangle, 
\label{eq:new_cond}
\end{align}
and we set $h^k  = \Phi^{M_k}_{k}$ just as in the previous algorithm. Note that checking \eqref{eq:new_cond} only requires computing $||\Phi^{m}_{k} ||_{TV}$ and two inner-products at each iteration. It then follows, due to the fact that $v^k \in \partial_0 ||f^k||_{1}$, that
\begin{equation*}
||h^k - \m(h^k) \ones ||_{1} - ||f^k||_{1} \geq \langle v^k, h^k - f^k \rangle.
\end{equation*}
Multiplying this inequality by $E(f^k)$ and adding it to the previous inequality yields
\begin{equation*}
||f^k||_{TV} + E(f^k)||h^k - \m(h^k) \ones ||_{1}  > E(f^k) ||f^k||_{1} + ||h^k||_{TV} + \theta \; E(f^k)|| h^k - f^k||^2_2,
\end{equation*}
or in other words
\begin{equation*}
E(f^k) > E(h^k) + \frac{\theta \; E(f^k)|| h^k - f^k||^2_2 } { ||h^k - \m(h^k) \ones ||_{1}  }.
\end{equation*}
That is, $h^k$ satisfies the desired energy inequality. As a consequence, all of the compactness and convergence results from the previous section hold, with only slight modification, for this algorithm as well.

Using \eqref{eq:new_cond} as a stopping condition in the total variation minimization solver  leads to the following variation of Algorithm \ref{algo2}:

\begin{algorithm}
\caption{Variation of Algorithm \ref{algo2} without median in inner stopping condition}
\vspace{.1cm}
\begin{center}
\begin{algorithmic}
\STATE $f^{0} $ non-constant function with $\m(f) = 0$ and $||f^0||_{2} = 1$, $\theta=.99$. \\
%\WHILE{ $||f^{k+1} - f^{k} ||_{2} \geq \mathrm{TOL}$ }
\WHILE{$E(f^{k}) - E(f^{k+1})  \geq \mathrm{TOL}$ }
\STATE $v^k \in \partial_{0} ||f^k||_{1}$ \\
\STATE{{$g^{k}=f^{k}+\, v^k$}}\\
\STATE{ Solve $h^k \approx \underset{u\in\R^n}{\arg\min}  \{ ||u||_{TV}+\frac{E(f^k)}{2}||u-g^{k}||_2^2  \}$ until 
	$$||f^k||_{TV} > ||h^k||_{TV} + \theta \; E(f^k)||h^k - f^k||^2_2  - E(f^k) \langle v^k,h^k - f^k\rangle, $$ }\\
%\STATE{$\h^k \in \HH(f^k)$} \\
\STATE{$h^{k}_{0} =h^k - \m(h^k)\ones$}
 \STATE{$f^{k+1}= \frac{h^k_0}{\|h^k_0\|_2} $}
\ENDWHILE
\end{algorithmic}
\end{center}
\label{algo3}
\end{algorithm}

\subsection*{Local Convergence Results}
By leveraging the inequality \eqref{eq:new_cond}, we can demonstrate that this approximate algorithm satisfies the same local convergence properties as the idealized algorithm. Recalling the definition from [NIPS], we say that a set-valued algorithm $\mathcal{A}$ is \emph{closed at local minima} (the CLM property) if $f^k \to f^\infty \in \Sn_0$ and $z^k \in \mathcal{A}(f^k)$ then $z^k \to f^\infty$ whenever $f^\infty$ is a local minimum of the energy. Note that the approximate algorithm defined above is, in fact, a set-valued algorithm due to the lack of uniqueness in $v^k$, i.e. the choice of subdifferential.

To demonstrate the CLM property for the approximate algorithm, suppose we have a sequence $f^k \in \Sn_0$ converging to some $f^\infty \in \Sn_0$ and let $h^k$ denote the corresponding sequence of intermediate steps. If $h^k \neq f^k$ only finitely many times then the CLM property is immediate. Indeed, then $h^k = f^k$ for all $k$ sufficiently large, which implies $h^k \to f^\infty$, $h^k_0 \to f^\infty$ and $z^k := h^k_0 / ||h^k_0||_2 \to f^\infty$ as well. Otherwise, $h^k \neq f^k$ infinitely many times. Given any subsequence of $\{z^k\}$ we may restrict attention to a further subsequence for which $h^{k_j} \neq f^{k_j}$ along the entire subsequence. As the $h^{k_j}$ satisfy the energy inequality, they lie in a compact set. By passing to a further subsequence if necessary, we may therefore assume that $h^{k_j} \to h^{\infty}$ and  $v^{k_j} \to v^\infty \in \partial_{0} ||f^\infty||_{1}$ while still retaining $f^{k_j} \to f^\infty$ and the fact that $h^{k_j}$ satisfy \eqref{eq:new_cond}. 

We now suppose that $h^\infty \neq f^\infty$ and shall obtain a contradiction. Indeed, if $h^\infty \neq f^\infty$ then by passing to the limit we find 
\begin{equation}
||f^\infty||_{TV} \geq ||h^\infty||_{TV} + \theta \; E(f^\infty)||h^\infty - f^\infty||^2_2  - E(f^\infty) \langle v^\infty,h^\infty - f^\infty\rangle. 
\label{eq:airhockey}
\end{equation}
For $\eta \in (0,1)$ let $h_{\eta} := \eta h^\infty + (1-\eta) f^\infty$. By convexity of the TV semi-norm, 
$$
||h_{\eta}||_{TV} \leq \eta ||h^\infty||_{TV} + (1-\eta) ||f^\infty||_{TV} \quad \Rightarrow \frac{1}{\eta} ||h_{\eta}||_{TV} - \frac{1-\eta}{\eta}||f^\infty||_{TV} \leq ||h^\infty||_{TV}.
$$
Substituting this estimate into \eqref{eq:airhockey} then shows
$$
\frac{1}{\eta} ||f^\infty||_{TV} \geq \frac{1}{\eta} ||h_\eta||_{TV} + \theta \; E(f^\infty)||h^\infty - f^\infty||^2_2  - \frac{1}{\eta} E(f^\infty) \langle v^\infty,h_\eta - f^\infty\rangle.
$$
Once again, the fact that $v^\infty \in \partial_{0} ||f^\infty||_{1}$ implies
$$
||h_{\eta} - \m(h_\eta)\ones||_{1} \geq ||f^\infty||_{1} + \langle v^\infty,h_\eta - f^\infty\rangle.
$$
Multiplying this inequality by $E(f^\infty)$ and adding it to $\eta$ times the previous inequality then shows
$$
E(f^\infty)||h_{\eta} - \m(h_\eta)\ones||_{1} \geq ||h_\eta||_{TV} + \eta \theta E(f^\infty)||h^\infty - f^\infty||^2_2.
$$
We may assume that $h_{\eta}$ is not constant, since otherwise this would imply $h^\infty = f^\infty$ as desired. We may therefore divide by $||h_{\eta} - \m(h_\eta)\ones||_{1}$ in the previous inequality to obtain
$$
E(f^\infty) \geq E(h_\eta) + \eta \theta E(f^\infty) \frac{ ||h^\infty - f^\infty||^2_2 } {||h_{\eta} - \m(h_\eta)\ones||_{1} }.
$$
If $||h^\infty - f^\infty||^2_2 > 0$ this would imply $E(h_\eta) < E(f^\infty)$ for any $\eta$ that is strictly positive. As $h_\eta \to f^\infty$ as $\eta \to 0$ this would contradict the fact that $f^\infty$ is a local minimum of the energy, whence $h^\infty = f^\infty$ as desired. Thus any subsequence of $h^k$ has a further subsequence that converges to $f^\infty$, meaning the whole sequence converges to this limit. This then implies that $h^k_0 \to f^\infty$ and $z^k \to f^\infty$ as well, and this establishes the CLM property for the approximate algorithm.

To formulate a notion of local convergence, we need an analogue of a ``strict'' local minimum of the TV-Balanced Cut energy. Due to the invariance of this energy under scaling and the addition of constants, we cannot refer to a local minimum as ``strict'' in the usual sense.  We must therefore remove the effects of these invariances when referring to a local minimum as strict. To this end, define the spherical and annular neighborhoods on $\Sn_0$ by
\begin{align*}
	\mathcal{B}_{\epsilon}(f^\infty) := \left\{ ||f-f^\infty||_2 \leq \epsilon \right\} \cap \Sn_0 \quad \mathcal{A}_{\delta,\epsilon}(f^\infty) := \left\{ \delta \leq ||f-f^\infty||_2 \leq \epsilon \right\} \cap \Sn_0.
\end{align*}
With these in hand we introduce the proper definition of a strict local minimum.
\begin{definition}[Strict Local Minima]
Let $f^\infty \in \Sn_0$. We say $f^\infty$ is a \textbf{strict local minimum} of the energy if there exists $\epsilon > 0$ so that $f \in \mathcal{B}_\epsilon(f^\infty)$ and $f \neq f^\infty$ imply $E(f) > E(f^\infty)$.
\end{definition}
\noindent The CLM property now allows us to quote a general result from \cite{BLUV12} that establishes a local stability property for the approximate algorithm:
\begin{lemma}[Lyapunov Stability at Strict Local Minima]\label{lem:stability}
Fix $f^0 \in \Sn_0$ and let $\{ f^k \} $ denote any sequence corresponding to the approximate algorithm. If $f^\infty$ is a strict local minimum of the energy, then for any $\epsilon > 0$ there exists a $\gamma > 0$ so that if $f^0 \in \mathcal{B}_{\gamma}(f^\infty)$ then $\{ f^k \} \subset \mathcal{B}_{\epsilon}(f^\infty)$.
\end{lemma}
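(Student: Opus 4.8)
The plan is to treat the TV-Balanced Cut energy $E$ as a strict Lyapunov function for the set-valued map $\A$ and to run the standard discrete-time stability argument, with the CLM property supplying the crucial single-step control. Two structural facts drive everything. First, the algorithm is monotone: by the scaling and translation invariance of $E$ one has $E(f^{k+1}) = E(h^k_0) = E(h^k)$, and the stopping condition forces $E(h^k) < E(f^k)$ unless $h^k = f^k$, so $E(f^{k+1}) \leq E(f^k)$ along every orbit (as already used in the proof of Lemma~\ref{lem:Comp_SD}). Second, the CLM property just established says exactly that $\A$ is upper semicontinuous at $f^\infty$ with value $\{f^\infty\}$; a routine subsequence-and-contradiction argument upgrades it to the statement that for every $\eta > 0$ there is a $\delta > 0$ with $\A(\mathcal{B}_\delta(f^\infty)) \subseteq \mathcal{B}_\eta(f^\infty)$.

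Concretely, given $\epsilon > 0$ I would first shrink $\epsilon$, if necessary, so that strict local minimality gives $E(f) > E(f^\infty)$ for every $f \in \mathcal{B}_\epsilon(f^\infty) \setminus \{f^\infty\}$. Next, using the single-step control above, I would pick a radius $\delta \in (0,\epsilon)$ so that $f \in \mathcal{B}_\delta(f^\infty)$ and $z \in \A(f)$ force $z \in \mathcal{B}_\epsilon(f^\infty)$. I would then manufacture an energy barrier from the annulus: since $\mathcal{A}_{\delta,\epsilon}(f^\infty)$ is compact, avoids $f^\infty$, and $E$ is continuous and strictly exceeds $E(f^\infty)$ on it, the minimum $\beta := \min_{f \in \mathcal{A}_{\delta,\epsilon}(f^\infty)} E(f)$ satisfies $\beta > E(f^\infty)$. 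Finally, continuity of $E$ at $f^\infty$ lets me choose $\gamma \in (0,\delta)$ with $E(f) < \beta$ for all $f \in \mathcal{B}_\gamma(f^\infty)$, and this $\gamma$ is the one claimed in the lemma.

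The core of the argument is then an induction showing that $f^0 \in \mathcal{B}_\gamma(f^\infty)$ forces $\|f^k - f^\infty\|_2 < \delta$ and $E(f^k) < \beta$ for all $k$. The base case holds since $\mathcal{B}_\gamma(f^\infty) \subseteq \mathcal{B}_\delta(f^\infty)$ and by the choice of $\gamma$. For the inductive step, the single-step control places $f^{k+1} \in \mathcal{B}_\epsilon(f^\infty)$, while monotonicity gives $E(f^{k+1}) \leq E(f^k) < \beta$; but any point of $\mathcal{B}_\epsilon(f^\infty)$ with energy below $\beta$ cannot lie in the annulus $\mathcal{A}_{\delta,\epsilon}(f^\infty)$, so it must in fact satisfy $\|f^{k+1}-f^\infty\|_2 < \delta$. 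This recovers the inductive hypothesis at step $k+1$ and yields $\{f^k\} \subseteq \mathcal{B}_\delta(f^\infty) \subseteq \mathcal{B}_\epsilon(f^\infty)$, as claimed.

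I expect the main obstacle to be precisely the possibility of a large single outer step: in a discrete-time iteration the orbit could, a priori, leap directly across the annular barrier from the inner ball to outside $\mathcal{B}_\epsilon(f^\infty)$, which a pure energy-descent argument alone cannot rule out. The CLM property is exactly the ingredient that excludes this, and verifying that it upgrades cleanly to uniform upper semicontinuity of $\A$ on a neighborhood of $f^\infty$ (rather than merely sequential closedness at $f^\infty$) is the one point that demands care. Since this is a general stability principle, the alternative I would otherwise take is simply to quote the corresponding theorem of \cite{BLUV12}, having already verified its sole hypothesis, the CLM property, above.
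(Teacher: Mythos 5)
Your proposal is correct and follows the same route as the paper: the paper's entire proof of this lemma consists of verifying the CLM property for the approximate algorithm and then quoting the general Lyapunov-stability theorem of \cite{BLUV12}, whose sole hypothesis is exactly that property. Your write-up simply unfolds the cited theorem into its standard form --- monotonicity of $E$ along orbits, the subsequence upgrade of CLM to uniform single-step control near $f^\infty$, the annular energy barrier, and induction --- and each of these steps is sound.
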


Loosely speaking, this means that if we have a good initial guess for the solution of the TV-Balanced Cut problem then the approximate algorithm defined above will remain close to this initial guess while simultaneously lowering the TV-Balanced Cut energy. We emphasize that this property holds regardless of any assumptions made about the total variation solver $\Phi^{m}$ other than convergence, e.g. the semigroup property. If we further assume the continuity and semigroup properties of the solver then this approximate algorithm satisfies the critical point property as well. In this case, the remaining theory of \cite{BLUV12} applies and we do, in fact, recover precisely \emph{all} of the theoretical properties of the idealized algorithm with this approximate total variation algorithm.

%%%%%%%%%%%%%%%%%%%%%%
\section{Numerical Experiments}
All experiments that follow use a symmetric $k$-nearest neighbor graph combined with the weight similarity function $w_{i,j}= \exp(-r_{i,j}^2/\sigma^2)$. Here, $r_{i,j} = \|x_i -x_j \|_2$ and the scale parameter $\sigma^2=3d_k^2$, where $d_k$ denotes the mean distance of the $k^{th}$ nearest neighbor.

We use the two-moon, MNIST and USPS datasets. The two-moon dataset  \cite{pro:BuhlerHein09pLapla} uses the same setting as in  \cite{pro:SzlamBresson10}. We take $k = 5$ nearest neighbors to construct the graph. We preprocessed the MNIST and USPS data by projecting onto the first $50$ principal components, and take $k = 10$ nearest neighbors for the MNIST and USPS datasets.

We use Algorithm \ref{algo3} and
 the method from \cite{art:ChambollePock11FastPD} to solve  the inner ROF  problem \eqref{TVstep}. We terminate each inner loop when either the condition
$$||f^k||_{TV} > ||h^k||_{TV} + \theta \; E(f^k)||h^k - f^k||^2_2  - E(f^k) \langle v^k,h^k - f^k\rangle, $$
 %$$E(f^k) > E(h^k) + \theta\frac{\; E(f^k) ||h^k - f^k||^2_2 }{|| h^k - \m(h^k)\ones||_{1} }$$
  is satisfied or $1,500$ iterations is reached (meaning that the solution has been found). We take $\theta=0.99$ is all experiments.

The following table summarizes the results of these tests. It shows  the mean error of classification (\% of misclassified data) and the mean computational time for the proposed algorithm and the previous algorithm from \cite{BLUV12} over $10$ experiments.\\

\vspace{-.2cm}
\begin{table}[h!]
\centering
\begin{tabular}{|c|c|c|c|c|c|c|c|c|c|}
\cline{2-5}		
 \multicolumn{1}{}{} &  \multicolumn{2}{|c|}{ Adaptive Algorithm \ref{algo3}} & \multicolumn{2}{|c|}{Non-adaptive Algorithm from   \cite{BLUV12}} \\
\cline{2-5}	
\multicolumn{1}{c|}{}  & Error (\%) & Time  & Error (\%) & Time \\
\hline	
2 moons &  9.06 & 2.03 sec. & 8.69 & 2.06 sec.   \\
\hline	
MNIST  (10 classes) &  11.76 & 21.85 min. & 11.78 & 45.01 min. \\
\hline	
USPS  (10 classes) &  4.11 & 3.08 min. & 4.11 & 5.15 min. \\
\hline		
\end{tabular}
%\caption{}
\label{tab:2classes}
\end{table}

%The new algorithm proves to be 2-3 times faster on these benchmark databases such as the MNIST database.\\

{\bf Reproducible research:} The code is available at \href{http://www.cs.cityu.edu.hk/~xbresson/codes.html}{http://www.cs.cityu.edu.hk/$\sim$xbresson/codes.html}\\

{\bf Acknowledgements:} This work supported by AFOSR MURI grant FA9550-10-1-0569, NSF grant  DMS-0902792,  and Hong Kong GRF grant \#110311.

\bibliographystyle{plain}

\bibliography{bib_nips}

\begin{thebibliography}{10}

\bibitem{BT09}
A.~Beck and M.~Teboulle.
\newblock A fast iterative shrinkage-thresholding algorithm for linear inverse
  problems.
\newblock {\em SIAM Journal on Imaging Sciences}, 2(1):183--202, 2009.

\bibitem{art:BertozziFlenner11DiffuseClassif}
A.~Bertozzi and A.~Flenner.
\newblock {Diffuse Interface Models on Graphs for Classification of High
  Dimensional Data}.
\newblock {\em Multiscale Modeling and Simulation}, 10(3):1090--1118, 2012.

\bibitem{BLUV12}
X.~Bresson, T.~Laurent, D.~Uminsky, and J.~von Brecht.
\newblock Convergence and energy landscape for cheeger cut clustering.
\newblock In {\em Advances in Neural Information Processing Systems (NIPS)},
  pages 1394--1402, 2012.

\bibitem{art:BressonTaiChanSzlam12TransLearn}
X.~Bresson, X.-C. Tai, T.F. Chan, and A.~Szlam.
\newblock {Multi-Class Transductive Learning based on $\ell^1$ Relaxations of
  Cheeger Cut and Mumford-Shah-Potts Model}.
\newblock {\em UCLA CAM Report}, 2012.

\bibitem{pro:BuhlerHein09pLapla}
T.~B\"{u}hler and M.~Hein.
\newblock {Spectral Clustering Based on the Graph p-Laplacian}.
\newblock In {\em International Conference on Machine Learning}, pages 81--88,
  2009.

\bibitem{art:ChambollePock11FastPD}
A.~Chambolle and T.~Pock.
\newblock {A First-Order Primal-Dual Algorithm for Convex Problems with
  Applications to Imaging}.
\newblock {\em Journal of Mathematical Imaging and Vision}, 40(1):120--145,
  2011.

\bibitem{art:Cheeger70RatioCut}
J.~Cheeger.
\newblock {A Lower Bound for the Smallest Eigenvalue of the Laplacian}.
\newblock {\em Problems in Analysis}, pages 195--199, 1970.

\bibitem{book:Chung97Spectral}
F.~R.~K. Chung.
\newblock {\em {Spectral Graph Theory}}, volume~92 of {\em CBMS Regional
  Conference Series in Mathematics}.
\newblock Published for the Conference Board of the Mathematical Sciences,
  Washington, DC, 1997.

\bibitem{art:GoldsteinOsher09SB}
T.~Goldstein and S.~Osher.
\newblock {The Split Bregman Method for L1-Regularized Problems}.
\newblock {\em SIAM Journal on Imaging Sciences}, 2(2):323--343, 2009.

\bibitem{pro:HeinBuhler10OneSpec}
M.~Hein and T.~B\"{u}hler.
\newblock {An Inverse Power Method for Nonlinear Eigenproblems with
  Applications in 1-Spectral Clustering and Sparse PCA}.
\newblock In {\em In Advances in Neural Information Processing Systems (NIPS)},
  pages 847--855, 2010.

\bibitem{pro:HeinSetzer11TightCheeger}
M.~Hein and S.~Setzer.
\newblock {Beyond Spectral Clustering - Tight Relaxations of Balanced Graph
  Cuts}.
\newblock In {\em In Advances in Neural Information Processing Systems (NIPS)},
  2011.

\bibitem{MKB12}
E.~Merkurjev, T.~Kostic, and A.~Bertozzi.
\newblock An mbo scheme on graphs for segmentation and image processing.
\newblock {\em UCLA CAM Report 12-46}, 2012.

\bibitem{pro:Rang-Hein-constrained}
S.~Rangapuram and M.~Hein.
\newblock {Constrained 1-Spectral Clustering}.
\newblock In {\em International conference on Artificial Intelligence and
  Statistics (AISTATS)}, pages 1143--1151, 2012.

\bibitem{art:RudinOsherFatemi92ROF}
L.~I. Rudin, S.~Osher, and E.~Fatemi.
\newblock {Nonlinear Total Variation Based Noise Removal Algorithms}.
\newblock {\em Physica D}, 60(1-4):259 -- 268, 1992.

\bibitem{SB09}
A.~Szlam and X.~Bresson.
\newblock A total variation-based graph clustering algorithm for cheeger ratio
  cuts.
\newblock {\em UCLA CAM Report 09-68}, 2009.

\bibitem{pro:SzlamBresson10}
A.~Szlam and X.~Bresson.
\newblock Total variation and cheeger cuts.
\newblock In {\em Proceedings of the 27th International Conference on Machine
  Learning}, pages 1039--1046, 2010.

\bibitem{VGB12}
Y.~van Gennip and A.~Bertozzi.
\newblock Gamma-convergence of graph ginzburg-landau functionals.
\newblock {\em Advances in Differential Equations}, 17(11-12):1115--1180, 2012.

\end{thebibliography}

\end{document}